\numberwithin{equation}{section}
 \newtheorem{defn}{Definition}
 \newtheorem{prop}[defn]{Proposition}
 \newtheorem{lem}[defn]{Lemma}
 \newtheorem{thm}[defn]{Theorem}
\theoremstyle{remark}
\newtheorem{rem}{\bf Remark}
\newcommand{\R}{\mathbb{R}}
\newcommand{\N}{\mathbb{N}}
\newcommand{\C}{\mathbb{C}}
\newcommand{\e}{\varepsilon}
\newcommand{\lec}{\lesssim}
\newcommand{\embed}{\hookrightarrow}
\newcommand{\LR}[1]{{\langle #1 \rangle}}
\title{Blowup solutions for the nonlinear Schr\"odinger 
equation with complex coefficient}
\author{Shota Kawakami and Shuji Machihara}
\address{Department of Mathematics, Faculty of Science, Saitama University, 255 Shimo-Okubo, Sakura-ku, Saitama City 338-8570, Japan}
\email{s.kawakami.090@ms.saitama-u.ac.jp, \ machihara@rimath.saitama-u.ac.jp}
\thanks{The second author was supported by JSPS Grant-in-Aid for Scientific Research C 
[grant number 16K05191].}
\begin{document}
\maketitle
\begin{abstract}
We construct a finite time blow up solution for the nonlinear Schr\"odinger equation 
with the power nonlinearity whose coefficient is complex number.
We generalize the range of both the power and the complex coefficient  
for the result of 
Cazenave, Martel and Zhao \cite{CMZ}. 
As a bonus, we may consider the space dimension $5$. 
We show a sequence of solutions closes to the blow up profile which is a 
blow up solution of ODE. We apply the Aubin-Lions lemma for the 
compactness argument for its convergence. 
\end{abstract}

\section{nonlinear Schr\"odinger equation with complex coefficient}
We consider the following nonlinear Schr\"odinger equation with complex coefficient 
of the power nonlinearity
\begin{align}\label{nls}
iu_t(t,x)+\Delta u(t,x)=\lambda|u(t,x)|^{\alpha}u(t,x), \quad t\in\R, \ x\in\R^N,
\end{align}
where 
$i=\sqrt{-1}, u_t=\partial_tu, \Delta u=\sum_{j=1}^{N}\partial_{x_j}^2u, \alpha>0, 
\lambda\in\C\backslash\{0\}$ and $u=u(t,x):\R\times\R^N\to\C$ is a solution. 
There are large number of papers for the case $\lambda\in\R$ which delt with, for examples, 
wellposedness and behaviours of solutions. 
In this case, we have the following conservation laws, charge and energy respectively
\begin{align*}
\frac{d}{dt}\|u(t)\|_{L^2}&=0, \\
\frac{d}{dt}\left(\frac12\|\nabla u(t)\|_{L^2}^2+\frac{\lambda}{\alpha+2}\|u(t)\|_{L^{\alpha+2}}^{\alpha+2}\right)
&=0.
\end{align*}
These laws do not hold with $\lambda\in\C\backslash\R$ in general. 
In the special case $\lambda=i$, there are results in the book written by Lions \cite{quelques}, 
the technique of monotone operators and compactness argument are
applied to have existence of the solutions. 
Cazenave, Martel and Zhao \cite{CMZ} investigate \eqref{nls} with the same 
setting $\lambda=i$. 
More general setting $\lambda\in\C$ are discussed in \cite{KOS} and  
which are sometimes called complex Ginzburg-Landau equation. 
We consider \eqref{nls} under this general setting $\lambda\in\C$ with some 
assumptions. We investigate the finite time blow-up phenomena of the 
solution of \eqref{nls}. 
There are former results. Kita \cite{Kita} proved the blow-up solution which 
starts with small initial data in one spatial dimension, so called, small data blow-up 
phenomena. 
Cazenave, Martel and Zhao \cite{CMZ} proved the blow-up solution in general 
dimensions. We introduce more details in \cite{CMZ} at Remark \ref{rem1} below.

If we set the initial data $u(0,x)=f(x)$ belonging to 
the Sobolev space of order 1, that is $H^1(\R^N)$, 
from the standard argument, there exists an unique time local solution of \eqref{nls}. 
There we assume the power condition
\begin{align}\label{LWP-p}
0<\alpha\le\frac{4}{N-2} 
\end{align}
where it means actually $0<\alpha<\infty$ for $N=1,2$ and $0<\alpha\le\frac{4}{N-2}$ 
for $n\ge3$, and we employ this rule throughout this paper. 
We introduce a blow-up profile for our argument:
\begin{align}\label{b-profile}
U(t,x)=(-\alpha\text{Im} \lambda(t-|x|^k))^{\frac{i\lambda}{\alpha\text{Im}\lambda}}. 
\end{align}
This is the same in \cite{CMZ} when $\lambda=i$. From the elemental calculation, 
we have
\begin{align*}
\lim_{t\to0-0}\|U(t)\|_{H^1}=\infty. 
\end{align*}
Now we state the main theorem.
\begin{thm}\label{thm1}
Let $N=1,2,3,4,5$. Let $\alpha$ and $\lambda$ satisfy
\begin{align}\label{p-assume1}
1<\alpha\le\frac{4}{N-2}, \qquad (\alpha+2){\rm{Im}}(\lambda)\ge\alpha|\lambda|,
\end{align}
and
\begin{align}\label{p-assume2}
 (\alpha+2){\rm{Im}}(\lambda)>\alpha|\lambda| \qquad {\rm{when}}\quad
\alpha>\frac{N}{N-2}\quad{\rm{or}}\quad\alpha=\frac{4}{N-2}.
\end{align}
Then there exists a solution $u\in C((-\infty,0),H^1(\R^N))$ of \eqref{nls} which 
blows up at $t=0$ in the sense of the following
\begin{align*}
\lim_{t\to0-0}\|u(t)\|_{H^1}=\infty. 
\end{align*}
More precisely there exist positive constants $C,\delta$ and $\mu$ such that 
\begin{align*}
\|u(t)-U(t)\|_{H^1}\le C(-t)^{\mu}, \quad-\delta<t<0
\end{align*}
where $U$ is the blow-up profile of \eqref{b-profile}. 
\end{thm}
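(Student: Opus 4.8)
The plan is to construct the blow-up solution as a limit of global solutions with truncated initial data, following the strategy of Cazenave--Martel--Zhao but adapting it to the complex coefficient $\lambda$ and the enlarged parameter range. First I would verify that the profile $U$ in \eqref{b-profile} is an \emph{exact} solution of the ODE-type equation obtained by dropping the Laplacian — that is, $iU_t=\lambda|U|^\alpha U$ — by direct differentiation, checking that the exponent $\frac{i\lambda}{\alpha\,\mathrm{Im}\,\lambda}$ is chosen precisely to balance the nonlinearity; the condition $\mathrm{Im}\,\lambda>0$ guaranteed by \eqref{p-assume1} ensures $U$ is well defined for $t<0$. I would then set up a sequence of approximate problems: for each $n$, solve \eqref{nls} backward in time starting from a regularized datum $u_n(t_n)=U(t_n)$ at times $t_n\uparrow 0$ (equivalently, truncate $U$ away from its singularity), obtaining by the local well-posedness recalled around \eqref{LWP-p} a family of $H^1$ solutions on a common interval $(-\delta,0)$.

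The heart of the argument is a uniform a priori estimate for the difference $v_n=u_n-U$ in $H^1$. I would derive the equation satisfied by $v_n$, namely $i(v_n)_t+\Delta v_n=\lambda(|u_n|^\alpha u_n-|U|^\alpha U)-\Delta U$, and perform energy estimates: multiply by $\overline{v_n}$ and by $\overline{\Delta v_n}$ (or $\overline{(v_n)_t}$), integrate over $\R^N$, and take real/imaginary parts. The dissipative sign coming from $\mathrm{Im}\,\lambda>0$ should control the nonlinear interaction, while the forcing term $\Delta U$ is the source whose size — measured in the relevant negative power of $(-t)$ — must be dominated. The two hypotheses \eqref{p-assume1} and \eqref{p-assume2} enter exactly here: the inequality $(\alpha+2)\mathrm{Im}(\lambda)\ge\alpha|\lambda|$ is what makes the exponent in $U$ have large enough real part so that $\Delta U\in L^2$ near the singularity with an integrable time weight, and the strict version in the borderline cases $\alpha>\frac{N}{N-2}$ or $\alpha=\frac{4}{N-2}$ restores a margin lost when the Sobolev or integrability exponents become critical. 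A Gronwall-type inequality should then yield $\|v_n(t)\|_{H^1}\le C(-t)^\mu$ uniformly in $n$.

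With the uniform bound in hand, I would extract a convergent subsequence. Since $H^1$ bounds alone give only weak compactness, I would invoke the Aubin--Lions lemma advertised in the abstract: the uniform $H^1$ bound together with a uniform bound on $\partial_t u_n$ in a weaker space (read off from the equation, where $\Delta u_n$ and the nonlinearity live in $H^{-1}$ or $L^{\alpha+2}$-type spaces) yields strong convergence in $L^2_{loc}$ on compact time--space sets, enough to pass to the limit in the nonlinear term $|u_n|^\alpha u_n$. The limit $u$ then solves \eqref{nls} on $(-\delta,0)$, inherits the estimate $\|u(t)-U(t)\|_{H^1}\le C(-t)^\mu$, and blows up at $t=0$ because $U$ does while the difference stays bounded.

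The main obstacle I anticipate is the uniform energy estimate, specifically controlling the $H^1$ norm of the forcing $\Delta U$ and the nonlinear difference against the dissipation near the singularity. Because $U$ is singular at $t=0$, every term carries a negative power of $(-t)$, and closing the estimate requires the weights to conspire so that the Gronwall constant stays finite; this is precisely where the sharp algebraic conditions on $\alpha$ and $\lambda$ are forced, and where extending to $N=5$ and to $\alpha>1$ (beyond the $\lambda=i$ case of \cite{CMZ}) demands the most care in tracking the critical Sobolev embeddings. The compactness step, by contrast, I expect to be comparatively routine once the uniform bounds are established.
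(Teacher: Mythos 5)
Your overall architecture (ODE profile, approximate solutions with data $U(t_n)$ at $t_n\uparrow0$, uniform $H^1$ difference estimate, Aubin--Lions compactness) matches the paper's, but two of your load-bearing claims are wrong or unjustified. First, local well-posedness does \emph{not} give your approximating family a common interval $(-\delta,0)$: the data satisfy $\|U(t_n)\|_{H^1}\to\infty$ as $t_n\uparrow0$, so the local (backward) existence times degenerate. The paper devotes a whole section to a global backward well-posedness theorem precisely to kill this issue: under \eqref{p-assume1} one has the monotonicity
\begin{align*}
\frac12\frac{d}{dt}\|u(t)\|_{L^2}^2=\mathrm{Im}(\lambda)\|u\|_{L^{\alpha+2}}^{\alpha+2}\ge0,
\qquad
\frac12\frac{d}{dt}\|\nabla u(t)\|_{L^2}^2
\ge\frac{(\alpha+2)\mathrm{Im}(\lambda)-\alpha|\lambda|}{2}\int|u|^{\alpha}|\nabla u|^2\,dx\ge0,
\end{align*}
so the $H^1$ norm decreases backward in time and each solution extends to $(-\infty,t_n]$; at the critical power $\alpha=\frac{4}{N-2}$ the strict inequality \eqref{p-assume2} is what yields the space--time bound ruling out backward blow-up. (A bootstrap on your difference estimate could substitute, but you would have to run it explicitly; as written this is a gap.) Second, you misattribute the role of the hypothesis on $\lambda$: the condition $(\alpha+2)\mathrm{Im}(\lambda)\ge\alpha|\lambda|$ has nothing to do with making $\Delta U\in L^2$ --- the profile estimates hold for any $\mathrm{Im}\,\lambda>0$, integrability of the time weights being arranged by taking the free exponent $k$ in $|x|^k$ large. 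The condition is exactly the sign condition in the energy identities for $\e=u-U$: writing $|u|^{\alpha}u-|U|^{\alpha}U$ via the mean value theorem with $F_z=\frac{\alpha+2}{2}|z|^{\alpha}$, $F_{\bar z}=\frac{\alpha}{2}|z|^{\alpha-2}z^2$, the dangerous quadratic terms obey
\begin{align*}
\mathrm{Re}\int(-i\lambda)\Big[\frac{\alpha+2}{2}|u|^{\alpha}|\nabla\e|^2
+\frac{\alpha}{2}|u|^{\alpha-2}u^2(\nabla\bar{\e})^2\Big]dx
\ge\Big(\frac{\alpha+2}{2}\mathrm{Im}(\lambda)-\frac{\alpha}{2}|\lambda|\Big)\int|u|^{\alpha}|\nabla\e|^2dx,
\end{align*}
and similarly at the $L^2$ level; the strict inequality in \eqref{p-assume2} is then used to absorb, when $\alpha\ge2$, a Cauchy--Schwarz remainder $\delta\int|U+\e|^{\alpha}|\nabla\e|^2$ into this positive term. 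The threshold $\alpha=\frac{N}{N-2}$ in \eqref{p-assume2} is a Gagliardo--Nirenberg constraint ($\theta=\frac{N(\alpha-1)}{2\alpha}\le1$) in the estimate of $\int|\e|^{\alpha}|\nabla U||\nabla\e|$, not a condition on $\lambda$, and the proof splits into the regimes $1<\alpha\le\frac{N}{N-2}$ and $2\le\alpha\le\frac{4}{N-2}$ accordingly; your generic ``dissipative sign from $\mathrm{Im}\,\lambda>0$'' is too weak to close the $\dot H^1$ estimate.

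On compactness there is a further omission: $H^1(\R^N)\embed L^2(\R^N)$ is \emph{not} compact, so Aubin--Lions with $X_0=H^1$ fails on the whole space. The paper proves an additional weighted bound $\||x|\e_n(t)\|_{L^2}\le C(T_n-t)^{\gamma}$ (via the truncated weights $e^{-a|x|^2}|x|$ and Fatou's lemma) exactly so that it can apply Aubin--Lions with $\Sigma\embed\embed L^2\embed H^{-1}$, obtaining strong $L^{\infty}((\tau,\delta):L^2)$ convergence globally in space, followed by a diagonal argument in $\tau$ and a separate convergence lemma for the nonlinearity $g_{\alpha}$. Your $L^2_{\mathrm{loc}}$ variant (Rellich on balls plus diagonalization over radii) can be made to work for passing to the limit in the distributional formulation, but as stated you neither flag the failure of global compactness nor supply the extra estimate or localization that repairs it; this step must be made precise before the argument closes.
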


\begin{rem}
The conditions \eqref{p-assume1} and \eqref{p-assume2} allow the followings
\begin{align*}
1<\alpha<\infty,\quad &(\alpha+2){\rm{Im}}(\lambda)\ge\alpha|\lambda| \quad
\text{for} \ N=1,2. \\
1<\alpha\le3,\quad &(\alpha+2){\rm{Im}}(\lambda)\ge\alpha|\lambda| \quad 
\text{or} \\
3<\alpha\le4,\quad &(\alpha+2){\rm{Im}}(\lambda)>\alpha|\lambda| \quad
\text{for} \ N=3. \\
1<\alpha<\frac{4}{N-2},\quad &(\alpha+2){\rm{Im}}(\lambda)\ge\alpha|\lambda| \quad 
\text{or} \\ 
\alpha=\frac{4}{N-2},\quad &(\alpha+2){\rm{Im}}(\lambda)>\alpha|\lambda| \quad
\text{for} \ N=4,5. 
\end{align*}
\end{rem}

\begin{rem}\label{rem1}
Cazenave-Martel-Zhao \cite{CMZ} gave the same conclusion under the assumption 
$N=1,2,3,4$ and for the power
\begin{align*}
2\le\alpha\le\frac{4}{N-2}
\end{align*}
and the coefficinet $\lambda=i$ which satisfy \eqref{p-assume1} and \eqref{p-assume2}. 
They, in fact, proved more generalized case that any number and anywhere for the blow up 
points.  We generalize the range of $\lambda$ and we reduce the lower bound of $\alpha$. 
For our lower bound $\alpha>1$, it seems difficult to reduce the number below 1 since 
our argument requires to estimate $H^1$ norm for the difference of the two functions, 
that are, solution of \eqref{nls} and the blow up profile $U$. 
We remark $\alpha=1$ is critical and still open as well. 
\end{rem}

In the sequential paper, we will deal with the double critical point 
\begin{align*}
\alpha=\frac{4}{N-2}, \qquad (\alpha+2){\rm{Im}}(\lambda)=\alpha|\lambda|
\end{align*}
for both the time global wellposendness and the blow up problem. 
We will apply the results in \cite{KOS} to 
solve the following complex Ginzburg-Landau equation for global existence time
\begin{align}\label{cgl}
iu_t(t,x)+(1-i\e)\Delta u(t,x)=\lambda|u(t,x)|^{\alpha}u(t,x), \quad t\in\R, \ x\in\R^N.
\end{align}
The solution $u_{\e}$ exists globally in the negative time 
$u_{\e}\in C((-\infty, 0]:H^1)$ for any $\e>0$ and we take limit $\e\to0+0$.



\section{Preliminaries}
Before going to our proof, we collect the standard estimates.

\begin{lem}\label{s-est-Lem}
Let $p>0$ and $n\in\N\cup\{0\}$. Then the estimates 
\begin{align}
||z|^{p-n}z^n-|w|^{p-n}w^n|\lec 
\begin{cases}
(|z|^{p-1}+|w|^{p-1})|z-w| \quad &{\rm{if}} \ p\ge1, \\
|z-w|^{p} \qquad\qquad &{\rm{if}} \ 0<p\le1, \label{s-est2}
\end{cases}
\end{align}
hold for $z,w\in\C$ where the implicit constant depends on $p,n$ and is independent of $z,w$.
\end{lem}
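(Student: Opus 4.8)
The plan is to regard the map $F(z)=|z|^{p-n}z^n$ as a function of the two real variables encoded in $z$ and $\overline z$, and to control its difference along the straight segment joining $w$ to $z$. Writing $|z|^{p-n}=(z\overline z)^{(p-n)/2}$, a direct computation of the Wirtinger derivatives gives $\p_z F=\frac{p+n}{2}|z|^{p-n}z^{n-1}$ and $\p_{\overline z}F=\frac{p-n}{2}|z|^{p-n-2}z^{n+1}$, so that both $|\p_z F|$ and $|\p_{\overline z}F|$ are $\lec|z|^{p-1}$ with a constant depending only on $p$ and $n$. Parametrizing the segment by $\gamma(t)=w+t(z-w)$, $t\in[0,1]$, the fundamental theorem of calculus together with the chain rule yields $F(z)-F(w)=\int_0^1\bigl(\p_zF(\gamma(t))(z-w)+\p_{\overline z}F(\gamma(t))\overline{(z-w)}\bigr)\,dt$, whence the master estimate $|F(z)-F(w)|\lec|z-w|\int_0^1|\gamma(t)|^{p-1}\,dt$. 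The whole lemma then reduces to bounding this last integral in the two ranges of $p$.

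When $p\ge1$ the exponent $p-1$ is nonnegative, so $s\mapsto s^{p-1}$ is nondecreasing; since $|\gamma(t)|\le(1-t)|w|+t|z|\le\max(|z|,|w|)$ I obtain $\int_0^1|\gamma(t)|^{p-1}\,dt\le\max(|z|,|w|)^{p-1}\le|z|^{p-1}+|w|^{p-1}$, which gives the first line of \eqref{s-est2} at once. Here the singularity of $|z|^{p-1}$ at the origin causes no trouble because $p-1\ge0$ keeps the integrand bounded; $F$ is locally Lipschitz and the Lebesgue form of the fundamental theorem applies even when the segment meets $0$.

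The range $0<p\le1$ is where the real work lies, since then $|z|^{p-1}$ blows up at the origin and the master estimate can fail when the segment $\gamma$ passes near $0$. I would resolve this by a dichotomy, assuming without loss $|z|\ge|w|$. If $|z-w|\ge\frac12|z|$ the two points are far apart relative to their size, and I simply use $|F(z)-F(w)|\le|z|^p+|w|^p\le2|z|^p\lec|z-w|^p$. If instead $|z-w|<\frac12|z|$ then $|w|>\frac12|z|$ and the entire segment stays away from the origin, $|\gamma(t)|\ge\frac12|z|$, so the master estimate gives $|F(z)-F(w)|\lec|z-w|\,|z|^{p-1}$; feeding back $|z-w|^{1-p}\le(\frac12|z|)^{1-p}$ converts this into $|z-w|^p$ and yields the second line of \eqref{s-est2}. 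The main obstacle is precisely this control near the origin in the sublinear regime, which forces the case split; everything else is the routine differentiation and the elementary segment estimate.
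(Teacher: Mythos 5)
Your proof is correct and complete. Note that the paper itself offers no proof of this lemma: it is listed under ``we collect the standard estimates,'' accompanied only by the remark that the modulus may carry a negative power $p-n<0$ while the total power $p$ stays positive. Your argument supplies exactly what the paper omits, and it handles that remark's point properly: the Wirtinger computation gives $\p_zF=\frac{p+n}{2}|z|^{p-n}z^{n-1}$ and $\p_{\bar z}F=\frac{p-n}{2}|z|^{p-n-2}z^{n+1}$, so only the homogeneous bound $|\p F|\lec|z|^{p-1}$ (and $|F(z)|=|z|^p$) ever enters, regardless of the sign of $p-n$. The two delicate spots are both addressed: for $p\ge1$ the segment may pass through the origin, but $F$ is locally Lipschitz there so the Lebesgue form of the fundamental theorem still applies; for $0<p\le1$ your dichotomy ($|z-w|\ge\frac12|z|$ versus $|z-w|<\frac12|z|$, with $|z|\ge|w|$) is the standard and correct way to tame the nonintegrable-looking singularity, since in the second case the whole segment satisfies $|\gamma(t)|>\frac12|z|$ and the conversion $|z-w|^{1-p}\le(\frac12|z|)^{1-p}$ closes the estimate. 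All constants depend only on $p$ and $n$, as the statement requires.
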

We remark for this lemma that we may consider minus power $p-n<0$ of the modulus, 
although the total power $(p-n)+n=p$ is positive. 

For the nonlinear term, we set 
\begin{align*}
g_{\alpha}(u):=|u|^{\alpha}u.
\end{align*}
\begin{lem}\label{con-abst}
Let $N=1,2,\ldots$.
Let $I$ be bounded interval. Suppose a sequence $(u_n)\subset L^{\infty}(I:H^1(\R^N))$ 
and a function $u\in L^{\infty}(I:L^2(\R^N))$ satisfy 
\begin{align}
&\sup_{n\in\N}\|u_n\|_{L^{\infty}(I:H^1)}<\infty, \label{bdd1} \\
&\lim_{n\to\infty}\|u_n-u\|_{L^{\infty}(I:L^2)}=0. \label{L2c}
\end{align}
Let $\alpha$ satisfies subcritical or critical condition
\begin{align*}
0<\alpha\le\frac{4}{N-2}.
\end{align*}
Then the limit and $g_{\alpha}$ of it belong to the spaces
\begin{align*}
u\in L^{\infty}(I:H^1(\R^N)) \quad{\rm{and}}\quad 
g_{\alpha}(u)\in L^{\infty}(I:L^{\frac{\alpha+2}{\alpha+1}}(\R^N))
\end{align*}
respectively, and the following convergences hold
\begin{align}
u_n\rightharpoonup u\qquad &*\text{weak in}\quad L^{\infty}(I:H^1(\R^N)), \label{con1} \\
\Delta u_n\rightharpoonup\Delta u\qquad &*\text{weak in}\quad 
L^{\infty}(I:H^{-1}(\R^N)), \label{con2} \\
g_{\alpha}(u_n)\rightharpoonup g_{\alpha}(u)\qquad &*\text{weak in}\quad 
L^{\infty}(I:L^{\frac{\alpha+2}{\alpha+1}}(\R^N)). \label{con3}
\end{align}
Moreover, suppose additional bounded condition
\begin{align}\label{bdd2}
&\sup_{n\in\N}\|u_n\|_{W^{1,\infty}(I:H^{-1})}<\infty,
\end{align}
then the limit also belong to the space
\begin{align*}
u\in W^{1,\infty}(I:H^{-1}(\R^N)),
\end{align*}
and the following convergence holds
\begin{align}\label{con4}
\partial_tu_n\rightharpoonup\partial_tu\qquad *\text{weak in}\quad L^{\infty}(I:H^{-1}(\R^N)).
\end{align}
\end{lem}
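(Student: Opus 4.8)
The plan is to obtain each weak-$*$ limit from the Banach--Alaoglu theorem and then to identify it by testing against smooth compactly supported functions, using the strong convergence \eqref{L2c} to pin down the limit. All target spaces are duals of separable Banach spaces, namely $L^\infty(I:H^1)=(L^1(I:H^{-1}))^*$, $L^\infty(I:H^{-1})=(L^1(I:H^1))^*$, and $L^\infty(I:L^{\frac{\alpha+2}{\alpha+1}})=(L^1(I:L^{\alpha+2}))^*$ (valid since the fibres are reflexive and hence have the Radon--Nikodym property). Consequently bounded sequences are weak-$*$ sequentially precompact and the weak-$*$ topology is metrizable on bounded sets; once the limit is shown to be unique, the full sequence therefore converges.

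First I would treat the linear statements. By \eqref{bdd1} and Banach--Alaoglu a subsequence of $(u_n)$ converges weak-$*$ in $L^\infty(I:H^1)$ to some $v$; testing against $\phi\in C_c^\infty(I\times\R^N)$, the $H^1$--$H^{-1}$ pairing reduces to the $L^2$ pairing, which by \eqref{L2c} converges to the pairing with $u$, so $v=u$. This gives $u\in L^\infty(I:H^1)$ and \eqref{con1}. For \eqref{con2} I would use that $\Delta$ is symmetric for the $H^1$--$H^{-1}$ duality: for $\phi\in L^1(I:H^1)$ one has $\int_I\LR{\Delta u_n,\phi}\,dt=\int_I\LR{u_n,\Delta\phi}\,dt$, and since $\Delta\phi\in L^1(I:H^{-1})$ the right side converges by \eqref{con1}. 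Under the extra bound \eqref{bdd2}, a subsequence of $(\p_t u_n)$ converges weak-$*$ in $L^\infty(I:H^{-1})$ to some $w$; integrating by parts in $t$ against $\psi(t)\phi(x)$ with $\psi\in C_c^\infty(I)$ and $\phi\in C_c^\infty(\R^N)$ transfers the time derivative onto $\psi$ and identifies $w=\p_t u$ through \eqref{L2c}, yielding $u\in W^{1,\infty}(I:H^{-1})$ and \eqref{con4}.

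The main obstacle is the nonlinear limit \eqref{con3}. Boundedness is immediate: since $|g_\alpha(u)|=|u|^{\alpha+1}$ we have $\|g_\alpha(u)\|_{L^{\frac{\alpha+2}{\alpha+1}}}=\|u\|_{L^{\alpha+2}}^{\alpha+1}$, and the Sobolev embedding $H^1(\R^N)\embed L^{\alpha+2}(\R^N)$, valid precisely for $\alpha\le\frac{4}{N-2}$, together with \eqref{bdd1} bounds $(g_\alpha(u_n))$ in $L^\infty(I:L^{\frac{\alpha+2}{\alpha+1}})$; the same estimate for $u$ shows $g_\alpha(u)\in L^\infty(I:L^{\frac{\alpha+2}{\alpha+1}})$. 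To identify the weak-$*$ limit I would pass to a subsequence for which $u_n\to u$ a.e. on $I\times\R^N$ (from \eqref{L2c} and the boundedness of $I$), whence $g_\alpha(u_n)\to g_\alpha(u)$ a.e. by continuity of $g_\alpha$. On any compact $K\subset I\times\R^N$ the uniform $L^{\frac{\alpha+2}{\alpha+1}}(K)$ bound, whose exponent is strictly larger than $1$, forces uniform integrability, so Vitali's convergence theorem gives $g_\alpha(u_n)\to g_\alpha(u)$ in $L^1(K)$, hence in $\D'(I\times\R^N)$.

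This is where the difficulty concentrates. Rather than attempting strong convergence of $g_\alpha(u_n)$ in a reflexive space over all of $\R^N$ — which the growth estimate of Lemma \ref{s-est-Lem} (applied with $p=\alpha+1$, $n=1$, giving $|g_\alpha(z)-g_\alpha(w)|\lec(|z|^\alpha+|w|^\alpha)|z-w|$) only yields under the stronger restriction $\alpha\le\frac{N}{N-2}$ — I localize and exploit the finiteness of $|K|$, so the argument covers the entire critical range $\alpha\le\frac{4}{N-2}$. Since $C_c^\infty(I\times\R^N)$ is dense in $L^1(I:L^{\alpha+2})$ and $(g_\alpha(u_n))$ is bounded in the dual, the distributional identification shows that every weak-$*$ cluster point of $(g_\alpha(u_n))$ equals $g_\alpha(u)$; metrizability on bounded sets then upgrades this to convergence of the whole sequence, establishing \eqref{con3}.
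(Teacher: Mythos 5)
Your proof is correct, and for the linear statements it matches the paper in substance: the paper also extracts a weak-$*$ limit from \eqref{bdd1}, identifies it with $u$ through \eqref{L2c}, upgrades to the whole sequence by a sub-subsequence/uniqueness argument (your metrizability remark is the same mechanism), obtains \eqref{con2} by duality, and proves \eqref{con4} by integrating by parts in $t$ against $C^1_0(I:H^1)$ test functions exactly as you do. For the nonlinear convergence \eqref{con3} you take a genuinely different route. The paper gives two arguments: the first splits cases, getting \emph{strong} convergence of $g_\alpha(u_n)$ in $L^{\infty}(I:L^{\frac{\alpha+2}{\alpha+1}})$ in the subcritical range $(N-2)\alpha<4$ by Gagliardo--Nirenberg interpolation between the vanishing $L^{\infty}L^2$ distance and the bounded gradients, and handling the critical case $(N-2)\alpha=4$ by a slice-wise pairing with $\phi\in L^1(I:C^{\infty}_0)$ using the H\"older exponent $\frac{(\alpha+2)(\alpha+4)}{\alpha^2+6\alpha+4}$ followed by dominated convergence in $s$; the second extracts an a.e.-convergent subsequence from $u_n\to u$ in $L^2(I\times\R^N)$ and invokes Lemma 1.3 of Lions (a.e. convergence plus uniform $L^p$ bounds imply weak $L^p$ convergence) slice-wise for a.e.\ $s$, then dominated convergence in time. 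Your argument is closest to this second proof but replaces the slice-wise Lions lemma by a global space-time step: Vitali on compacta (uniform integrability from the uniform $L^{\frac{\alpha+2}{\alpha+1}}$ bound with exponent $>1$) yields $L^1_{\mathrm{loc}}$, hence distributional, convergence, and density of $C^\infty_c(I\times\R^N)$ in $L^1(I:L^{\alpha+2})$ together with the dual bound identifies every weak-$*$ cluster point. This buys a uniform treatment of the whole range $0<\alpha\le\frac{4}{N-2}$ with no subcritical/critical case split and no external lemma; what it gives up is the stronger strong-norm convergence the paper obtains in the subcritical case. Two small remarks: your parenthetical claim that the Lipschitz bound of Lemma \ref{s-est-Lem} yields strong convergence only for $\alpha\le\frac{N}{N-2}$ undersells the paper's interpolation, which achieves it for all $(N-2)\alpha<4$ (this aside plays no role in your proof); and since a.e.\ convergence is extracted along one subsequence, the claim that \emph{every} cluster point equals $g_\alpha(u)$ should be justified by noting that any subsequence of $(u_n)$ still converges in $L^2(I\times\R^N)$ and so admits a further a.e.-convergent subsequence --- standard, and implicit in your final sentence, but worth stating.
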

\begin{rem}
We do not need to take any subsequence $(u_{n_k})$ in the conclusions. 
We do not require that $u_n$ satisfy any equation likely as \eqref{nls}, neither. 
\end{rem}

\begin{proof}
From \eqref{bdd1}, we have a subsequence $(u_{n_k})$ and $v\in L^{\infty}(I:H^1)$ such as
\begin{align*}
u_n\rightharpoonup v\qquad *\text{weak in}\quad L^{\infty}(I:H^1).
\end{align*}
Since limit is unique, we obtain $u=v\in L^{\infty}(I:H^1)$. If there is 
subsequence $(u_{n_k})$ which does not converge to $u$, then there are 
its subsequence  $(u_{n_{k_j}})$, 
some test function $\phi\in L^1(I:H^{-1})$ and $\delta>0$ satisfy
\begin{align}\label{contradict1}
|\LR{u_{n_{k_j}}-u,\phi}|>\delta\qquad\text{for any} \ j=1,2,\ldots.
\end{align}
This is a contradiction since this subsequence  $(u_{n_{k_j}})$ is bounded, and so, 
it contains a subsequence  $(u_{n_{k_{j_l}}})$ which does not satisfy \eqref{contradict1}. 
Therefore the whole sequence converges to the limit. We obtain \eqref{con1}, and so,  
\eqref{con2} follows as well. 
Next we consider \eqref{con3}. Incidentally, 
we have the following convergence in the norm for the subcritical power 
$(N-2)\alpha<4$. By using the Gagliard-Nirenberg inequality
\begin{multline*}
\|g(u_n)-g(u)\|_{L^{\infty}L^{\frac{\alpha+2}{\alpha+1}}}
\lec(\|u_n\|_{L^{\infty}L^{\alpha+2}}^{\alpha}+\|u\|_{L^{\infty}L^{\alpha+2}}^{\alpha})
\|u_n-u\|_{L^{\infty}L^{\alpha+2}} \\
\lec(\|u_n\|_{L^{\infty}H^1}^{\alpha}+\|u\|_{L^{\infty}H^1}^{\alpha})
\|u_n-u\|_{L^{\infty}L^{2}}^{1-\frac{N\alpha}{2(\alpha+2)}}
\|\nabla u_n-\nabla u\|_{L^{\infty}L^2}^{\frac{N\alpha}{2(\alpha+2)}}\to0
\end{multline*}
as $n\to\infty$, where $\frac{N\alpha}{2(\alpha+2)}<1$. 
For the critical power $(N-2)\alpha=4$, we shall prove the same convergence 
but in the $*$ weak sense. We take any  $\phi\in L^1(I:C^{\infty}_0(\R^N))$. 
For each $s\in I$, we calculate 
\begin{multline*}
\LR{g(u_n(s))-g(u(s)),\phi(s)}
\le\|g(u_n(s))-g(u(s))\|_{L^{\frac{(\alpha+2)(\alpha+4)}{\alpha^2+6\alpha+4}}}
\|\phi(s)\|_{L^{\frac{(\alpha+2)(\alpha+4)}{4}}} \\
\lec(\|u_n(s)\|_{L^{\alpha+2}}^{\alpha}+\|u(s)\|_{L^{\alpha+2}}^{\alpha})
\|u_n(s)-u(s)\|_{L^{\frac{\alpha+4}{2}}}\|\phi(s)\|_{L^{\frac{(\alpha+2)(\alpha+4)}{4}}} \\
\lec(\|u_n(s)\|_{H^1}^{\alpha}+\|u(s)\|_{H^1}^{\alpha})
\|u_n(s)-u(s)\|_{L^{2}}^{1-\frac{N\alpha}{2(\alpha+4)}}
\|\nabla u_n(s)-\nabla u(s)\|_{L^2}^{\frac{N\alpha}{2(\alpha+4)}}
\|\phi(s)\|_{L^{\frac{(\alpha+2)(\alpha+4)}{4}}} \\
\to0 \qquad\text{as}\quad n\to\infty.
\end{multline*}
The integrable majorant in $s$ is the following
\begin{multline*}
|\LR{g(u_n(s))-g(u(s)),\phi(s)}|
\le\|g(u_n(s))-g(u(s))\|_{L^{\frac{\alpha+2}{\alpha+1}}}
\|\phi(s)\|_{L^{\alpha+2}} \\
\lec(\|u_n(s)\|_{L^{\alpha+2}}^{\alpha+1}+\|u(s)\|_{L^{\alpha+2}}^{\alpha+1})
\|\phi(s)\|_{L^{\alpha+2}}\lec\|\phi(s)\|_{L^{\alpha+2}} \in L^1(I).
\end{multline*}
We then apply Lebesgue's dominated convergence theorem to obtain
\begin{align*}
\lim_{n\to\infty}\int_I\LR{g(u_n(s))-g(u(s)),\phi(s)}ds
=\int_I\lim_{n\to\infty}\LR{g(u_n(s))-g(u(s)),\phi(s)}ds=0.
\end{align*}
Since $L^1(I:C_0^{\infty})\embed L^1(I:L^{\alpha+2})$ is dense, this implies  
as desired
\begin{align}\label{weakstar1}
g(u_n)\rightharpoonup g(u) \quad\text{weak} * \ \text{in} \ 
L^{\infty}(I:L^{\frac{\alpha+2}{\alpha+1}}). 
\end{align}
Actually we can prove this weak $*$ convergence for $\{g(u_n)\}$ 
from $L^{\infty}H^1$ boundedness 
and $L^{\infty}L^2$ convergence of $\{u_n\}$ in another way. 
Since the interval is finite, we have
\begin{align*}
u_n\to u\in L^{\infty}(I;L^2)\embed L^2(I\times\R^N),
\end{align*}
we then have a subsequence $\{u_{n_k}\}$ which converges almost everywhere, 
i.e. 
\begin{align*}
u_{n_k}\to u \quad\text{a.e. \ in}\quad I\times\R^N.
\end{align*}
This gives
\begin{align*}
g(u_{n_k})\to g(u) \quad\text{a.e. \ in}\quad I\times\R^N.
\end{align*}
From Fubini's theorem, we have
\begin{align*}
g(u_{n_k})(s)\to g(u)(s) \quad\text{a.e.}\quad x\in\R^N
\end{align*}
for a.e. $s\in I$.
We estimate the norm
\begin{align*}
\|g(u_{n_k})(s)\|_{L^{\frac{\alpha+2}{\alpha+1}}_{x}}
\le\|u_{n_k}(s)\|_{L^{\alpha+2}_{x}}^{\alpha+1}
\le\|u_{n_k}(s)\|_{H^1_{x}}^{\alpha+1}
\end{align*}
and this is uniformly bounded in $k$ and $s\in I$. 
From Lemma 1.3 in \cite{quelques} , we have
\begin{align*}
g(u_{n_k})(s)\rightharpoonup g(u)(s) \quad\text{weak} \ \text{in} \ 
L^{\frac{\alpha+2}{\alpha+1}}(\R^N)
\end{align*}
for a.e. $s$ in $I$. Here we do not need to take a subsequence out. 
We estimate
\begin{align*}
\|g(u)(s)\|_{L^{\frac{\alpha+2}{\alpha+1}}_{x}}
\le\liminf_{k\to\infty}\|g(u_{n_k})(s)\|_{L^{\frac{\alpha+2}{\alpha+1}}_{x}}\le C
\end{align*}
with some constant $C$. So, 
for any $\phi\in L^1(I;L^{\alpha+2})$ and this $C$, we have
\begin{align*}
|
\LR{g(u_{n_k})(s)-g(u(s)),\phi(s)}
|
\lec C\|\phi(s)\|_{L^{\alpha+2}} \in L^1(I).
\end{align*}
From Lebesgue's dominated convergence theorem again gives the result, 
\begin{align*}
g(u_{n_k})\rightharpoonup g(u) \quad\text{weak} * \ \text{in} \ 
L^{\infty}(t,0;L^{\frac{\alpha+2}{\alpha+1}}).
\end{align*}
We may say from this argument that the whole sequence $(u_n)$ converges
to the same limit $u$ which corresponds to \eqref{weakstar1}. 
This complete the second proof for weak $*$ convergence for $(g(u_n))$. 

Next we assume \eqref{bdd2} additionally and show \eqref{con4}. From the same argument above,  
we have $\partial_tu\in L^{\infty}(I:H^{-1})$ where $u$ is the limit in \eqref{L2c}. 
Form \eqref{con1}, we have for any $\phi\in C^1_0(I:H^1)$,
\begin{align*}
\int_I\LR{\partial_tu_n(s)-\partial_tu(s),\phi(s)}_{H^{-1},H^1}ds
=-\int_I\LR{u_n(s)-u(s),\partial_t\phi(s)}_{H^{-1},H^1}ds\to0
\end{align*}
as $n\to\infty$. This implies 
\begin{align}\label{weakstar4}
\partial_tu_n\rightharpoonup\partial_tu \quad\text{weak} * \ \text{in} \ L^{\infty}(I:H^{-1}).
\end{align}
Combining with \eqref{con1}, we obtain \eqref{con4} and this complete the proof.
\end{proof}

We define the space
\begin{align*}
\Sigma=\Sigma(\R^N):&=\{f\in H^1(\R^N):\|f\|_{\Sigma}<\infty\}, \\
&\|f\|_{\Sigma}^2=\|f\|_{H^1}^2+\||\cdot|f\|_{L^2}^2.
\end{align*}
\begin{lem}
Let $N\in\N$. Then the embedding $\Sigma(\R^N)\embed L^2(\R^N)$ is compact.
\end{lem}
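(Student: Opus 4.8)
The plan is to establish sequential compactness directly: I will show that every bounded sequence in $\Sigma$ admits a subsequence converging strongly in $L^2(\R^N)$. So let $(f_n)\subset\Sigma$ satisfy $\sup_n\|f_n\|_{\Sigma}\le M<\infty$. The argument rests on two independent ingredients: local compactness supplied by the Rellich--Kondrachov theorem, which controls the behaviour on bounded sets, and uniform tightness at spatial infinity, which is forced by the weighted part $\||\cdot|f\|_{L^2}$ of the $\Sigma$-norm.

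First I would extract a locally convergent subsequence. Since $(f_n)$ is bounded in $H^1(\R^N)$, for each fixed $R>0$ its restriction to the ball $B_R=\{|x|<R\}$ is bounded in $H^1(B_R)$, and the Rellich--Kondrachov theorem gives that $H^1(B_R)\embed L^2(B_R)$ is compact. A diagonal extraction over the exhaustion $R=1,2,3,\dots$ then produces a single subsequence, still written $(f_n)$, together with a limit $f$ such that $f_n\to f$ strongly in $L^2(B_R)$ for every $R>0$. Boundedness in $\Sigma$ and weak lower semicontinuity of the $\Sigma$-norm ensure $f\in\Sigma$.

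Second I would control the tails uniformly, and this is exactly where the weight enters. For every $R>0$ and every $n$ one has
\begin{align*}
\int_{|x|>R}|f_n(x)|^2\,dx
=\int_{|x|>R}\frac{|x|^2\,|f_n(x)|^2}{|x|^2}\,dx
\le\frac{1}{R^2}\,\||\cdot|f_n\|_{L^2}^2
\le\frac{M^2}{R^2},
\end{align*}
and the identical estimate holds for the limit $f$. Thus the mass of the sequence escaping past radius $R$ is uniformly (in $n$) of size $O(R^{-2})$, i.e. the family is tight.

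Finally I would combine the two ingredients. Given $\e>0$, first fix $R$ so large that $M^2/R^2<\e$, which simultaneously bounds the exterior contributions of every $f_n$ and of $f$; then choose $n$ large so that $\|f_n-f\|_{L^2(B_R)}^2<\e$ by the local strong convergence. Splitting $\|f_n-f\|_{L^2(\R^N)}^2$ into the pieces over $B_R$ and over $\{|x|>R\}$ yields a bound of order $\e$, whence $f_n\to f$ strongly in $L^2(\R^N)$, proving compactness of the embedding. The only genuine subtlety is that the full-space embedding $H^1(\R^N)\embed L^2(\R^N)$ is \emph{not} compact, since a fixed bump can be translated to infinity without losing $H^1$-norm; the sole purpose of the weight $|x|$ is to forbid this escape through the tightness estimate above. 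Isolating and exploiting that mechanism is the crux of the proof, while the remaining steps are routine applications of Rellich--Kondrachov and a splitting argument.
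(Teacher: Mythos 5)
Your proof is correct. The paper states this lemma without proof, treating it as standard background (alongside the Aubin--Lions lemma it quotes from Simon \cite{ALS}), so there is no in-paper argument to compare against; your scheme --- Rellich--Kondrachov on the balls $B_R$ with a diagonal extraction over $R=1,2,\dots$, the uniform tail bound $\int_{|x|>R}|f_n|^2\,dx\le M^2/R^2$ forced by the weight, and the final splitting into $B_R$ and its complement --- is exactly the canonical proof of this fact, and every step checks out (including the tail bound for the limit $f$, which follows by Fatou or by passing to the limit on annuli, and your closing remark correctly isolates why the weight is indispensable, since translations show $H^1(\R^N)\embed L^2(\R^N)$ alone is not compact).
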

We introduce the Aubin-Lions lemma, see Simon \cite{ALS}.
\begin{lem}
Let $X_0, X$ and $X_1$ be three Banach spaces with $X_0\embed X\embed X_1$ where 
$X_0$ is compactly embedded in $X$, and $X$ is continuously embedded in $X_1$. 
For $1\le p,q\le\infty$, define
\begin{align*}
\Phi:=
\begin{cases}
\{u\in L^p(0,T:X_0) \ | \ u_t\in L^q(0,T:X_1)\}, \quad\text{if} \ p<\infty,\\
\{u\in C(0,T:X_0) \ | \ u_t\in L^q(0,T:X_1)\}, \quad\text{if} \ p=\infty.
\end{cases}
\end{align*}
Then
\begin{enumerate}
\item If $p<\infty$, the embedding $\Phi\embed L^p(0,T:X)$ is compact.
\item If $p=\infty, q>1$, the embedding $\Phi\embed C(0,T:X)$ is compact.
\end{enumerate}
\end{lem}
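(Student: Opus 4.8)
The plan is to reduce both assertions to a single interpolation inequality combined with a classical compactness criterion: the Fr\'echet--Kolmogorov theorem in $L^p(0,T:X)$ for part (1), and the Arzel\`a--Ascoli theorem in $C(0,T:X)$ for part (2). The crucial first step is Ehrling's lemma: since $X_0\embed X$ is compact and $X\embed X_1$ is continuous, for every $\eta>0$ there is a constant $C_\eta>0$ with
\[
\|v\|_X\le\eta\|v\|_{X_0}+C_\eta\|v\|_{X_1}\qquad\text{for all }v\in X_0.
\]
I would establish this by contradiction: if it failed there would be $\eta_0>0$ and a sequence $v_n$, normalized by $\|v_n\|_{X_0}=1$, with $\|v_n\|_{X_1}\to0$; by compactness of $X_0\embed X$ a subsequence converges in $X$, its limit must vanish in $X_1$ and hence in $X$, contradicting $\|v_n\|_X>\eta_0$.

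The second step extracts the time regularity contained in the bound on $u_t$. For $u\in\Phi$ one writes $u(t+h)-u(t)=\int_t^{t+h}u_t(s)\,ds$ and applies H\"older in $X_1$ to obtain
\[
\|u(t+h)-u(t)\|_{X_1}\le h^{1-1/q}\|u_t\|_{L^q(0,T:X_1)}.
\]
Hence the time translates of any $\Phi$-bounded family are uniformly small in the $X_1$-norm as $h\to0$, and when $q>1$ the family is uniformly H\"older continuous as an $X_1$-valued map.

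For part (1), with $p<\infty$, I would verify the two hypotheses of the Fr\'echet--Kolmogorov criterion (in Simon's form) for a bounded set $F\subset\Phi$. The averages $\int_{t_1}^{t_2}u\,dt$ are bounded in $X_0$ by H\"older, so compactness of $X_0\embed X$ makes them relatively compact in $X$. For the uniform continuity of translates I would apply Ehrling to $\tau_hu-u$: the $X_0$-part is dominated by $2\|u\|_{L^p(0,T:X_0)}$, uniformly bounded over $F$, while the $X_1$-part tends to zero with $h$ by the estimate above; choosing $\eta$ small first and then $h$ small yields $\sup_{u\in F}\|\tau_hu-u\|_{L^p(0,T-h:X)}\to0$. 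These two facts give relative compactness in $L^p(0,T:X)$.

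For part (2), with $p=\infty$ and $q>1$, I would instead invoke Arzel\`a--Ascoli in $C(0,T:X)$. At each fixed $t$ the values are bounded in $X_0$, hence relatively compact in $X$; equicontinuity in $X$ follows from Ehrling together with the uniform $X_1$-H\"older continuity established above. The main obstacle, and the reason the two cases must be treated separately, lies precisely here: the condition $q>1$ is indispensable for equicontinuity, since $L^1$-control of $u_t$ alone would permit jumps, and the bookkeeping in the translation estimate must be made uniform over the entire bounded family through the two-parameter limit in $\eta$ and $h$. Otherwise the argument is a clean pairing of Ehrling's lemma with the appropriate classical compactness theorem.
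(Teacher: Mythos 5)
The paper itself contains no proof of this lemma: it is quoted from Simon \cite{ALS} (``We introduce the Aubin-Lions lemma, see Simon \cite{ALS}''), so there is no internal argument to compare against. Your strategy --- Ehrling's lemma (compact plus continuous embedding yields $\|v\|_X\le\eta\|v\|_{X_0}+C_\eta\|v\|_{X_1}$), Simon's Fr\'echet--Kolmogorov-type criterion for $p<\infty$, and Arzel\`a--Ascoli for $p=\infty$ --- is exactly the route of Simon's original paper, and most of it is correctly executed: the contradiction proof of Ehrling, the compactness of the averages $\int_{t_1}^{t_2}u\,dt$ in $X$, the $\eta$-first-then-$h$ two-parameter limit, and the equicontinuity argument for part (2) under $q>1$ are all sound.

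There is, however, one genuine gap. Part (1) of the statement allows $q=1$, and your translation estimate $\|u(t+h)-u(t)\|_{X_1}\le h^{1-1/q}\|u_t\|_{L^q(0,T:X_1)}$ is vacuous there, since $h^{1-1/q}=1$. Worse, the intermediate claim that ``the time translates of any $\Phi$-bounded family are uniformly small in the $X_1$-norm as $h\to0$'' is actually false for $q=1$: the mass of $u_t$ may concentrate near a point, so no pointwise-in-$t$ uniform modulus exists. What saves part (1) is that Simon's criterion only requires smallness of $\|\tau_hu-u\|_{L^p(0,T-h:X_1)}$, which can be recovered by a Fubini argument: setting $g(t)=\int_t^{t+h}\|u_t(s)\|_{X_1}\,ds$, one has $g(t)\le\|u_t\|_{L^1(0,T:X_1)}$ and $\int_0^{T-h}g(t)\,dt\le h\|u_t\|_{L^1(0,T:X_1)}$, hence $g(t)^p\le\|u_t\|_{L^1}^{p-1}g(t)$ gives
\[
\|\tau_hu-u\|_{L^p(0,T-h:X_1)}\le h^{1/p}\,\|u_t\|_{L^1(0,T:X_1)},
\]
which is uniformly small over a $\Phi$-bounded family precisely because $p<\infty$. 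With this substitution for the case $q=1$ your proof of part (1) is complete; and the failure of any pointwise estimate at $q=1$ is, as you correctly noted, exactly why $q>1$ cannot be dropped in part (2), where smallness is needed at every $t$. A last cosmetic point: to write $u(t+h)-u(t)=\int_t^{t+h}u_t(s)\,ds$ for $u\in L^p(0,T:X_0)$ with $u_t\in L^q(0,T:X_1)$ you should pass to the absolutely continuous $X_1$-valued representative, which exists since both spaces embed in $L^1_{\mathrm{loc}}(0,T:X_1)$.
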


\section{Time global well-posedness}\label{GWP}
In this section we show the existence of solution from any initial data in $H^1$. 
We later consider the sequence of solutions $v_n, n=1,2,\ldots$ on each time interval 
$[0,T^*_n)$ where $T^*_n$ is maximal existence time. If we obtain the time global 
well-posedness, we have uniform existence time $T^*_n=\infty, n=1,2,\ldots$. 
\begin{thm}\label{WP-thm}
Let $n\in\N, \lambda\in\C$ and
\begin{align}\label{power-condi}
0<\alpha\le\frac{4}{N-2}. 
\end{align} 
Then for any $f\in H^1(\R^N)$ 
 there exists $T^*, -T_*>0$ and unique solution
\begin{align*}
u\in C((T_*,T^*),H^1(\R^N)).
\end{align*}
Moreover if we additionally assume $(\alpha+2){\rm Im}(\lambda)>\alpha|\lambda|$,  
then $T_*=-\infty$.
\end{thm}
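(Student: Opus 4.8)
The plan is to prove Theorem \ref{WP-thm} in two stages: first local well-posedness for arbitrary $\lambda\in\C$, then global existence in negative time under the dissipativity condition $(\alpha+2)\mathrm{Im}(\lambda)>\alpha|\lambda|$. For the local theory I would recast \eqref{nls} in Duhamel form
\begin{align*}
u(t)=e^{it\Delta}f-i\lambda\int_0^t e^{i(t-s)\Delta}g_{\alpha}(u(s))\,ds,
\end{align*}
and run a standard contraction-mapping argument in a suitable Strichartz space intersected with $C([-T,T],H^1)$. The nonlinearity $g_\alpha(u)=|u|^\alpha u$ is estimated using Lemma \ref{s-est-Lem}: in the regime $\alpha\ge1$ the Lipschitz-type bound $|g_\alpha(z)-g_\alpha(w)|\lesssim(|z|^\alpha+|w|^\alpha)|z-w|$ together with the admissibility range $0<\alpha\le\frac{4}{N-2}$ gives the required contraction on a small ball after choosing $T$ small; the critical case $\alpha=\frac{4}{N-2}$ is handled by taking $T$ small relative to the Strichartz norm of $e^{it\Delta}f$ rather than relative to the data size. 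Since the free propagator $e^{it\Delta}$ is unitary and the Strichartz estimates are insensitive to the phase of $\lambda$, the complex coefficient causes no difficulty at this stage; uniqueness follows from the same difference estimate and a Gronwall argument.

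For the global-in-negative-time statement the key is an a priori bound on $\|u(t)\|_{H^1}$ as $t\to-\infty$, since local theory plus a blow-up alternative reduces everything to ruling out finite-time blow up in the past. The essential observation is that the dissipativity assumption $(\alpha+2)\mathrm{Im}(\lambda)>\alpha|\lambda|$ makes the nonlinearity damping rather than amplifying for negative time. Concretely I would compute the evolution of the charge and of the energy-type quantity. Multiplying \eqref{nls} by $\bar u$ and integrating the imaginary part yields
\begin{align*}
\frac{d}{dt}\|u(t)\|_{L^2}^2=-2\,\mathrm{Im}(\lambda)\|u(t)\|_{L^{\alpha+2}}^{\alpha+2},
\end{align*}
so for $\mathrm{Im}(\lambda)>0$ (which the hypothesis forces, as $(\alpha+2)\mathrm{Im}(\lambda)>\alpha|\lambda|\ge0$) the $L^2$ norm is nondecreasing as $t$ decreases, giving $\|u(t)\|_{L^2}\le\|f\|_{L^2}$ for $t\le0$. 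For the gradient, pairing \eqref{nls} with $\bar u_t$ and taking real and imaginary parts produces an identity for $\frac{d}{dt}\bigl(\frac12\|\nabla u\|_{L^2}^2+\frac{\mathrm{Re}(\lambda)}{\alpha+2}\|u\|_{L^{\alpha+2}}^{\alpha+2}\bigr)$ whose sign is controlled precisely when $(\alpha+2)\mathrm{Im}(\lambda)$ dominates $\alpha|\lambda|$; the gain here is that the dissipative term $\mathrm{Im}(\lambda)\|u\|_{L^{\alpha+2}}^{\alpha+2}$ beats the cross term coming from $\mathrm{Re}(\lambda)$.

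The main obstacle I anticipate is the energy estimate, not the local theory. The functional $E(t)=\frac12\|\nabla u\|_{L^2}^2+\frac{\mathrm{Re}(\lambda)}{\alpha+2}\|u\|_{L^{\alpha+2}}^{\alpha+2}$ is no longer conserved and, worse, when $\mathrm{Re}(\lambda)<0$ its potential part has the wrong sign, so one cannot directly read off an $H^1$ bound from $E$ alone; one must combine the differential inequality for $E$ with the $L^2$-decay identity and the Gagliardo–Nirenberg inequality to absorb the potential term into the kinetic one. Carrying this out rigorously requires the computation to be done on smooth approximate (or truncated) solutions and then passed to the limit, since $u_t$ lives only in $H^{-1}$ a priori; here I would regularize the equation — for instance via the parabolic regularization \eqref{cgl} with $\varepsilon>0$ anticipated in the paper — derive the bounds uniformly in $\varepsilon$, and pass $\varepsilon\to0+$ using the compactness furnished by Lemma \ref{con-abst} and the Aubin–Lions lemma. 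The precise algebraic threshold $(\alpha+2)\mathrm{Im}(\lambda)>\alpha|\lambda|$ is exactly what is needed to make the resulting Gronwall-type inequality yield a bound that does not blow up in finite negative time, thereby forcing $T_*=-\infty$.
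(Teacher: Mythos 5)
Your local-in-time argument (Duhamel formula plus Strichartz contraction, with the existence time depending on $\|f\|_{H^1}$ in the subcritical case and on the profile of $f$ in the critical case) is exactly what the paper invokes, so that half is fine. The gap is in the global half: the energy-functional route you outline would not close, and the paper does something simpler and structurally different. Pairing the equation with $\bar u_t$ does not produce a clean identity for $E(t)=\frac12\|\nabla u\|_{L^2}^2+\frac{\mathrm{Re}(\lambda)}{\alpha+2}\|u\|_{L^{\alpha+2}}^{\alpha+2}$: the term $\mathrm{Re}\int i\lambda|u|^{\alpha}u\,\bar u_t\,dx$ leaves behind $\mathrm{Im}(\lambda)\,\mathrm{Im}\int|u|^{\alpha}u\,\bar u_t\,dx$, which is not a time derivative and is not controlled by $E$; substituting the equation for $u_t$ there generates terms of type $\|u\|_{L^{2\alpha+2}}^{2\alpha+2}$ and $\int|u|^{\alpha}|\nabla u|^2dx$ that Gagliardo--Nirenberg cannot absorb into $\|\nabla u\|_{L^2}^2$ for large $\alpha$, so the Gronwall inequality you anticipate does not close. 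The paper instead differentiates $\|\nabla u(t)\|_{L^2}^2$ directly, pairing the equation with $\Delta\bar u$ and integrating by parts:
\begin{align*}
\frac12\frac{d}{dt}\|\nabla u\|_{L^2}^2
&=\frac{\alpha+2}{2}\,\mathrm{Im}(\lambda)\int|u|^{\alpha}|\nabla u|^2dx
-\frac{\alpha}{2}\,\mathrm{Re}\Big(i\lambda\int|u|^{\alpha-2}u^2(\nabla\bar u)^2dx\Big) \\
&\ge\frac{(\alpha+2)\mathrm{Im}(\lambda)-\alpha|\lambda|}{2}\int|u|^{\alpha}|\nabla u|^2dx\ge0,
\end{align*}
using only the pointwise bound $|u^2(\nabla\bar u)^2|\le|u|^2|\nabla u|^2$. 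Together with the mass identity this makes $\|u(t)\|_{H^1}$ monotone nondecreasing in $t$, hence bounded by the data for all $t\le0$: no energy functional, no Gronwall, no absorption, and no parabolic regularization is needed (the paper defers \eqref{cgl} to a sequel; it plays no role in this theorem). Your worry about the sign of $\mathrm{Re}(\lambda)$ never arises because $E$ never appears. Note also a sign slip: the correct identity is $\frac{d}{dt}\|u\|_{L^2}^2=+2\,\mathrm{Im}(\lambda)\|u\|_{L^{\alpha+2}}^{\alpha+2}$, so the $L^2$ norm grows \emph{forward} in time; your displayed minus sign contradicts the bound $\|u(t)\|_{L^2}\le\|f\|_{L^2}$ for $t\le0$ that you then assert (which is the correct conclusion, but it follows from the correct sign, not yours).

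The second genuine gap is the critical power. Your reduction ``a priori $H^1$ bound plus blow-up alternative implies global existence'' is valid only for $\alpha<\frac{4}{N-2}$; at $\alpha=\frac{4}{N-2}$, which the theorem includes, the local existence time depends on the profile of the data rather than its $H^1$ norm, so a uniform $H^1$ bound alone does not rule out $T_*>-\infty$, and your proposal is silent here. This is precisely where the strict hypothesis $(\alpha+2)\mathrm{Im}(\lambda)>\alpha|\lambda|$ is consumed in the paper: integrating the gradient inequality above over $(T_*,0)$ and using $\int|u|^{\alpha}|\nabla u|^2dx\gtrsim\|\nabla(|u|^{\frac{\alpha+2}{2}})\|_{L^2}^2$ with Sobolev embedding yields
\begin{align*}
\int_{T_*}^0\|u(t)\|_{L^{\frac{N(\alpha+2)}{N-2}}}^{\alpha+2}dt
\le\frac{\|\nabla u(0)\|_{L^2}^2-\|\nabla u(T_*)\|_{L^2}^2}{(\alpha+2)\mathrm{Im}(\lambda)-\alpha|\lambda|}<\infty,
\end{align*}
and finiteness of this critical space-time norm, via the embedding $W^{1,r}(\R^N)\embed L^{\frac{N(\alpha+2)}{N-2}}(\R^N)$ with $(r,\alpha+2)$ Strichartz admissible, forces $T_*=-\infty$. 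Without this (or an equivalent critical space-time bound) your argument does not prove the critical endpoint of the theorem.
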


\begin{proof}
The standard argument, contraction mapping principle by using Strichartz estimate, 
gives the time local well-posedness where the maximal existence 
time $T^*$ and $T_*$ depend on $\|f\|_{H^1}$ for the 
subcritical power $\alpha<\frac{4}{N-2}$ and on the profile of $f$ for the 
critical power  $\alpha=\frac{4}{N-2}$. 
In order to obtain the global soluvability, we deduce a priori estimate.
\begin{align*}
\frac12\frac{d}{dt}\|u(t)\|_{L^2}^2
&=\text{Re}\int u_t(t,x)\overline{u(t,x)}dx \\
&=\text{Re}\int(i\Delta u-i\lambda|u|^{\alpha}u)\overline{u}dx \\
&=\text{Re}\int i|\nabla u|^2-i\lambda|u|^{\alpha+2}dx  \\
&=\text{Im}\lambda\|u\|_{L^{\alpha+2}}^{\alpha+2}\ge0
\end{align*}
from the assumption Im$\lambda>0$. We also have
\begin{equation}\label{ap-est2}
\begin{aligned}
\frac12\frac{d}{dt}\|\nabla u(t)\|_{L^2}^2
&=-\text{Re}\int u_t\Delta\overline{u}dx \\
&=-\frac{\alpha+2}{2}\text{Re}(i\lambda)\int|u|^{\alpha}|\nabla u|^2
-\frac{\alpha}{2}\text{Re}(i\lambda)\int|u|^{\alpha-2}u^2(\nabla\bar{u})^2 \\
&\ge\frac{\alpha+2}{2}\text{Im}(\lambda)\int|u|^{\alpha}|\nabla u|^2
-\frac{\alpha}{2}|\lambda|\int|u|^{\alpha-2}u^2(\nabla\bar{u})^2 \\
&\ge\frac{(\alpha+2)\text{Im}(\lambda)-\alpha|\lambda|}{2}\int|u|^{\alpha}|\nabla u|^2dx
\ge0, 
\end{aligned}
\end{equation}
where the fourth line just used Re$[u^2(\nabla\bar{u})^2]\le|u|^2|\nabla\bar{u}|^2$. 
Therefore we have $\frac{d}{dt}\|u(t)\|_{H^1}\ge0$ and conclude 
global existence of $u$ on $(-\infty,0]$ for the subcritical power. 
With respect to the critical power, we may apply the argument in \cite{CMZ} 
with all $\lambda$ of \eqref{p-assume2} 
besides the critical complex coefficient $(\alpha+2)\text{Im}(\lambda)=\alpha|\lambda|$. 
Indeed we utilize \eqref{ap-est2} to have
\begin{align}
\int_{T_*}^0\|u(t)\|_{L^{\frac{N(\alpha+2)}{N-2}}}^{\alpha+2}dt
\le\frac{\|\nabla u(0)\|_{L^2}^2-\|\nabla u(T_*)\|_{L^2}^2}
{(\alpha+2)\text{Im}(\lambda)-\alpha|\lambda|}
<\infty
\end{align}
for the maximal existence time $T_*<0$. The index satisfies the embedding 
$W^{1,r}(\R^N)\embed L^{\frac{N(\alpha+2)}{N-2}}(\R^N)$ 
with Strichartz admissible $(r,\alpha+2)$ which implies $T_*=-\infty$. 
\end{proof}

\section{Estimates of $U$}
We estimate the profile $U$ in \eqref{b-profile}. This function satisfies the following ODE
\begin{align}
iU_t=\lambda|U|^{\alpha}U, \qquad t\not=0,x\in\R^N.
\end{align}
We collect the estimates on $U$ which are from \cite{CMZ} or slight modifications. 
We have for $1\le p\le\infty$ and any $t<0$,   
\begin{align}
\|U(t)\|_{L^p}&\lec(-t)^{-\frac{1}{\alpha}+\frac{N}{pk}}, \label{U-eq1} \\
\|\nabla U(t)\|_{L^p}&\lec(-t)^{-\frac{1}{\alpha}-\frac{1}{k}+\frac{N}{pk}}, \label{U-eq2}  \\
\|\Delta U(t)\|_{L^2}&\lec(-t)^{-\frac{1}{\alpha}-\frac{2}{k}+\frac{N}{2k}}, \label{U-eq3}  \\
\|\nabla\Delta U(t)\|_{L^2}&\lec(-t)^{-\frac{1}{\alpha}-\frac{3}{k}+\frac{N}{2k}},  \label{U-eq4} \\ 
\|(1+|\cdot|)\Delta U(t)\|_{L^2}
&\lec(-t)^{-\frac{1}{\alpha}-\frac{2}{k}+\frac{N}{2k}}. \label{U-eq5}
\end{align}
\begin{proof}
These estimates \eqref{U-eq1} -- \eqref{U-eq4} follows by the calculation in \cite{CMZ}. 
The estimate \eqref{U-eq5} is new and follow by the scaling argument as
\begin{align*}
\|(1+|\cdot|)\Delta U(t)\|_{L^2} \lec(1+(-t)^{\frac{1}{k}})\|\Delta U(t)\|_{L^2}
\end{align*}
and use \eqref{U-eq3} for small $t<0$. 
\end{proof}

\section{Difference between the solution and the profile}
Since the profile $U$ blows up at $t=0$, in order to obtain the blow up phenomena 
it suffice to estimate the difference between the solution of \eqref{nls} and 
the profile which converges to $0$ as $t\to0-0$. 
We actually discuss the approximate solutions with the initial data $U(T_n)$ at
the initial time $T_n=\frac1n$ for each $n=1,2,\ldots$. 
As we saw in section \ref{GWP} that \eqref{nls} is time globally wellposed, 
there is no need to worry about the degeneration of the existence times for the 
sequence of these solutions. 
We consider the Cauchy problem of \eqref{nls} with the initial data $U$ defined above 
at the initial time $T_n=-\frac1n$,
\begin{align}\label{C-nls}
  \left\{
    \begin{array}{l}
      u_t=i\Delta u-i\lambda|u|^{\alpha}u, \quad-\infty<t\le T_n, \ x\in\R^N, \\
      u(T_n)=U(T_n), \qquad x\in\R^N.
    \end{array}
  \right.
\end{align}
Theorem \ref{WP-thm} gives the unique existence of the global solution of \eqref{C-nls} 
for each $n$ 
\begin{align*}
u\in C((-\infty,T_n],H^1).
\end{align*}
We define and estimate the following
\begin{align*}
\e(t,x):=u(t,x)-U(t,x).
\end{align*}
Although it seems better to denote $u_n$ and $\e_n$ for $u$ and $\e$ respectively 
in each $n$, 
we abbreviate them when there is no confusion. Then $\e$ satisfies
\begin{align*}
\partial_t\e&=i\Delta\e-i\lambda|u|^{\alpha}u+i\lambda|U|^{\alpha}U+i\Delta U, 
\quad -\infty<t\le T_n, \ x\in\R^N, 
\\
\e(T_n)&=0, \qquad x\in\R^N. 
\end{align*}
\begin{prop}
There are positive constants $C_1, C_2, \mu, \gamma$ and $\delta$ 
such that for any $n$ 
\begin{align}\label{G1}
\|\e_n(t)\|_{H^1}&\le C_1(T_n-t)^{\mu}, \quad T_n-\delta\le t\le T_n, \\
\||x|\e_n(t)\|_{L^2}&\le C_2(T_n-t)^{\gamma}, \quad T_n-\delta\le t\le T_n. \label{G2}
\end{align}
\end{prop}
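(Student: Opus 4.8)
The plan is to control the three weighted energies $\|\e_n(t)\|_{L^2}^2$, $\|\nabla\e_n(t)\|_{L^2}^2$ and $\||x|\e_n(t)\|_{L^2}^2$ by differentiating each in $t$, substituting the equation for $\e_n$, and integrating backward from the initial time $T_n$, where $\e_n(T_n)=0$. Since the datum vanishes at the right endpoint, each differential identity becomes an integral inequality of Gronwall type, and a Gronwall/bootstrap argument should convert the time growth of the source term $\Delta U$ into the asserted polynomial bounds \eqref{G1} and \eqref{G2}. The constants will be uniform in $n$ because, after the substitution $\tau=T_n-t$, every $n$-dependent weight $(-s)^{-p}=(1/n+\sigma)^{-p}$ is dominated by $\sigma^{-p}$, which is integrable on $(0,\tau)$ precisely when $p<1$ and yields a clean power $\tau^{1-p}$ independent of $n$.

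The structural heart of the argument is a favorable sign in the nonlinear term, a pointwise version of the a priori estimate \eqref{ap-est2}. Writing $g_\alpha(u)-g_\alpha(U)=\int_0^1 Dg_\alpha(U+\theta\e_n)[\e_n]\,d\theta$ with $Dg_\alpha(w)[\psi]=|w|^\alpha\psi+\alpha|w|^{\alpha-2}\text{Re}(\bar w\psi)w$, a direct computation shows that under \eqref{p-assume1} one has $\text{Re}\big[(-i\lambda)(Dg_\alpha(w)[\psi])\bar\psi\big]\ge0$ for all $w,\psi\in\C$; indeed the associated quadratic form in $(\text{Re}(\bar w\psi),\text{Im}(\bar w\psi))$ is positive semidefinite exactly when $4(\alpha+1)(\text{Im}(\lambda))^2\ge\alpha^2(\text{Re}(\lambda))^2$, which is \eqref{p-assume1} after squaring. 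In the $L^2$ identity $\tfrac12\tfrac{d}{dt}\|\e_n\|_{L^2}^2=\text{Re}\int(-i\lambda)(g_\alpha(u)-g_\alpha(U))\bar\e_n+\text{Re}\int i\Delta U\,\bar\e_n$ the skew term $i\Delta\e_n$ drops, the nonlinear term is therefore nonnegative and can be discarded when integrating backward, and only $\|\Delta U\|_{L^2}\|\e_n\|_{L^2}$ survives, giving $\|\e_n(t)\|_{L^2}^2\lec\int_t^{T_n}\|\Delta U\|_{L^2}\|\e_n\|_{L^2}\,ds$ via \eqref{U-eq3}. Retaining the nonlinear self-interaction instead would produce the non-integrable coefficient $\|U\|_{L^\infty}^\alpha\lec(-t)^{-1}$, so discarding it is essential.

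For the $H^1$-level estimate I would differentiate $\|\nabla\e_n\|_{L^2}^2$, again lose the skew term, and integrate by parts to replace $\overline{\Delta\e_n}$ by $\overline{\nabla\e_n}$. Using $\nabla(g_\alpha(u)-g_\alpha(U))=Dg_\alpha(u)[\nabla\e_n]+(Dg_\alpha(u)-Dg_\alpha(U))[\nabla U]$, the first (diagonal) piece again has the good sign by the quadratic-form inequality applied to $\psi=\partial_j\e_n$, while the second (off-diagonal) piece is estimated by Lemma \ref{s-est-Lem}: both $\big||u|^\alpha-|U|^\alpha\big|$ and $\big||u|^{\alpha-2}u^2-|U|^{\alpha-2}U^2\big|$ are bounded by $(|u|^{\alpha-1}+|U|^{\alpha-1})|\e_n|$, which forces $\alpha>1$ so that the Lipschitz case of the lemma applies. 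The off-diagonal contribution $\lec\int(|U|^{\alpha-1}+|\e_n|^{\alpha-1})|\e_n|\,|\nabla U|\,|\nabla\e_n|$ is then distributed by H\"older and Gagliardo--Nirenberg, the $|U|^{\alpha-1}$ part being measured through \eqref{U-eq1} and \eqref{U-eq2}, while the purely nonlinear $|\e_n|^{\alpha-1}$ part yields a higher power of $\|\e_n\|_{H^1}$ absorbable in the bootstrap; the source contributes $\|\nabla\Delta U\|_{L^2}\|\nabla\e_n\|_{L^2}$ via \eqref{U-eq4}. The weighted estimate \eqref{G2} is analogous: differentiating $\||x|\e_n\|_{L^2}^2$ the skew term no longer vanishes but leaves only a commutator $\lec\|\nabla\e_n\|_{L^2}\||x|\e_n\|_{L^2}$ coupling to the already-controlled $H^1$ quantity, the nonlinear term keeps its sign, and the source is bounded by $\|(1+|\cdot|)\Delta U\|_{L^2}$ through \eqref{U-eq5}.

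I expect the main obstacle to be the off-diagonal $H^1$ term together with the attendant exponent bookkeeping: one must verify that, after inserting the profile estimates \eqref{U-eq1}--\eqref{U-eq5} and performing the change of variables $\tau=T_n-t$, every power of $(-s)$ that appears lands strictly below the integrability threshold $1$, so that the backward integral produces a genuinely positive power $\tau^{\mu}$ (respectively $\tau^{\gamma}$). It is precisely here that the dimension restriction $N\le5$, the upper range of $\alpha$, the strict inequality \eqref{p-assume2} in the endpoint cases, and a suitable choice of the exponent $k$ in the profile \eqref{b-profile} must be used. Checking these inequalities, closing the coupled Gronwall system for the three quantities, and justifying the energy computations by a standard regularization (so that $\Delta\e_n$ and $|x|\e_n$ are legitimately in $L^2$), constitute the technical core of the proof.
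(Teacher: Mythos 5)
Your plan is, in outline, exactly the paper's proof: differentiate the three quantities $\|\e_n\|_{L^2}^2$, $\|\nabla\e_n\|_{L^2}^2$ and a weighted $L^2$ norm, exploit the sign-definite diagonal part of the linearized nonlinearity under \eqref{p-assume1} (your quadratic-form computation is correct, and your discriminant condition $4(\alpha+1)({\rm Im}\,\lambda)^2\ge\alpha^2({\rm Re}\,\lambda)^2$ is precisely \eqref{p-assume1} squared; the paper uses the cruder bound $|{\rm Re}(i\lambda z)|\le|\lambda||z|$ to the same end in \eqref{I2eq} and \eqref{positive1}), estimate the off-diagonal part $(Dg_\alpha(u)-Dg_\alpha(U))[\nabla U]$ by Lemma \ref{s-est-Lem} (which is where $\alpha>1$ enters, as you say), integrate backward from $\e_n(T_n)=0$ with the profile bounds \eqref{U-eq1}--\eqref{U-eq5}, and regularize the weight for \eqref{G2} --- the paper uses the Gaussian cutoff $e^{-a|x|^2}$ and Fatou as $a\to0$, matching your ``standard regularization,'' and your uniform-in-$n$ substitution $(-s)^{-p}\le(T_n-s)^{-p}$ is the paper's passage from $(-t)^{\mu_1}-(-T_n)^{\mu_1}$ to $(T_n-t)^{\mu_1}$ via \eqref{s-est2}.

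There is, however, one concrete gap: your treatment of the pure-power term, ``distributed by H\"older and Gagliardo--Nirenberg \ldots\ a higher power of $\|\e_n\|_{H^1}$ absorbable in the bootstrap,'' only closes in the range $1<\alpha\le\frac{N}{N-2}$. Estimating $\int|\e|^{\alpha}|\nabla U||\nabla\e|\,dx$ via $\|\e\|_{L^{2\alpha}}^{\alpha}\|\nabla U\|_{L^{\infty}}\|\nabla\e\|_{L^2}$ requires the interpolation exponent $\theta=\frac{N(\alpha-1)}{2\alpha}\le1$, i.e.\ $\alpha(N-2)\le N$, which excludes for instance $3<\alpha\le4$ when $N=3$. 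You file the remaining range under ``exponent bookkeeping'' where \eqref{p-assume2} ``must be used,'' but the fix is a distinct mechanism, not sharper bookkeeping: the paper splits into the cases \eqref{p-case1} and \eqref{p-case2}, and for $2\le\alpha\le\frac{4}{N-2}$ writes $|\e|^{\alpha}\lec|\e|\bigl(|U|^{\alpha-1}+|U+\e|^{\alpha-1}\bigr)$, applies Cauchy--Schwarz to the second piece to produce $\delta\int|U+\e|^{\alpha}|\nabla\e|^2+\frac{1}{4\delta}\int|\e|^2|U+\e|^{\alpha-2}|\nabla U|^2$, and \emph{absorbs the first term into the coercive quadratic term} \eqref{positive1}; this absorption is exactly where the strict inequality \eqref{p-assume2} is consumed (the coefficient $\frac{\alpha+2}{2}{\rm Im}(\lambda)-\frac{\alpha}{2}|\lambda|$ must be strictly positive), and the leftover integral is then handled using $\alpha\ge2$ and Gagliardo--Nirenberg. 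Without this idea your scheme cannot reach the full range of Theorem \ref{thm1}; with it, the rest of your proposal (the $L^2$ estimate, the $H^1$ estimate for $\alpha\le\frac{N}{N-2}$, and the weighted estimate via \eqref{U-eq5}) tracks the paper's argument faithfully.
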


\begin{proof}
We start with the estimate on $L^2$ norm. 
\begin{align*}
\frac12\frac{d}{dt}\|\e(t)\|_{L^2}^2
&=\text{Re}\int(i\Delta\e-i\lambda(|u|^{\alpha}u-|U|^{\alpha}U)+i\Delta U)
\overline{\e}dx \\
&=\text{Re}\int(-i\lambda(|u|^{\alpha}u-|U|^{\alpha}U)+i\Delta U)
\overline{\e}dx=:I+\text{Re}\int i\Delta U\overline{\e}dx.
\end{align*}
We estimate $I$ that is the first term plus second term. 
We apply the mean value theorem for the two variables 
function $F(z)=F(z,\bar{z})=|z|^{\alpha}z$,
\begin{align*}
&|u|^{\alpha}u-|U|^{\alpha}U \\
&=\int_0^1F_z(U+\theta(u-U))(u-U)+F_{\bar{z}}(U+\theta(u-U))
(\overline{u-U})d\theta
\end{align*}
where $F_z=\frac{\alpha+2}{2}|z|^{\alpha}, F_{\bar{z}}=\frac{\alpha}{2}|z|^{\alpha-2}z^2$. 
We estimate
\begin{equation}\label{I2eq}
\begin{aligned}
-I&=\text{Re}\int\int_0^1 i\lambda
\frac{\alpha+2}2|U+\theta(u-U)|^{\alpha}|u-U|^2d\theta dx \\
&\qquad+\text{Re}\int\int_0^1 i\lambda
\frac{\alpha}2|U+\theta(u-U)|^{\alpha-2}(U+\theta(u-U))^2
(\overline{u-U})^2d\theta dx \\
&\le-\text{Im}(\lambda)\int\int_0^1
\frac{\alpha+2}2|U+\theta(u-U)|^{\alpha}|u-U|^2d\theta dx \\
&\qquad+|\lambda|\int\int_0^1\frac{\alpha}2|U+\theta(u-U)|^{\alpha}|u-U|^2d\theta dx
\le0
\end{aligned}
\end{equation}
where we used \eqref{p-assume1} at the last inequality. 
We therefore obtain 
\begin{align*}
\frac12\frac{d}{dt}\|\e(t)\|_{L^2}^2
\ge\text{Re}\int i\Delta U\overline{\e}dx
\ge-\|\Delta U\|_{L^2}\|\e\|_{L^2}.
\end{align*}
where we used Re$\int i\Delta\e\overline{\e}dx=0$ and $I_2\ge0$ in \eqref{I2eq}.
We may write
\begin{align*}
\frac{d}{dt}\|\e(t)\|_{L^2}\ge-\|\Delta U\|_{L^2}
\ge-C(-t)^{-\frac{1}{\alpha}-\frac{4-N}{2k}}. 
\end{align*}
We integrate this on the interval $(t,T_n)$ and apply \eqref{s-est2} to have
\begin{align}\label{eq01}
\|\e(t)\|_{L^2}\le C\left((-t)^{\mu_1}-(-T_n)^{\mu_1}\right)
\le C(T_n-t)^{\mu_1}
\end{align}
where
\begin{align*}
0<\mu_1=1-\frac{1}{\alpha}-\frac{4-N}{2k}<1
\end{align*}
for sufficiently large $k$. We next estimate $\dot{H}^1$ norm. 
\begin{align*}
\frac{d}{dt}\|\nabla\e(t)\|_{L^2}^2
&=\text{Re}\int\nabla(i\Delta\e-i\lambda(|u|^{\alpha}u-|U|^{\alpha}U)
+i\Delta U)\nabla\bar{\e}dx \\
&=\text{Re}\int(-i\lambda)\Big[\frac{\alpha+2}{2}(|u|^{\alpha}\nabla u-|U|^{\alpha}\nabla U) \\
&\qquad\qquad+\frac{\alpha}2(|u|^{\alpha-2}u^2\nabla\bar{u}-|U|^{\alpha-2}U^2\nabla\bar{U})
\Big]\nabla\overline{\e}
+i\nabla\Delta U\nabla\overline{\e}dx \\
&=\text{Re}\int(-i\lambda)\Big[\frac{\alpha+2}{2}(|u|^{\alpha}-|U|^{\alpha})\nabla U
+\frac{\alpha+2}{2}|u|^{\alpha}\nabla\e \\
&\qquad\qquad+\frac{\alpha}2(|u|^{\alpha-2}u^2-|U|^{\alpha-2}U^2)\nabla\bar{U}
+\frac{\alpha}{2}|u|^{\alpha-2}u^2\nabla\bar{\e}\Big]\nabla\overline{\e} \\
&\qquad\qquad+i\nabla\Delta U\nabla\overline{\e}dx.
\end{align*}
We estimate the sum of second and forth terms which is the worst if we consider 
the modulus of it  
in the sense of decay as $t\to0-0$. 
However it can be estimated since we just consider the real part of it likely as 
\begin{equation}\label{positive1}
\begin{aligned}
&\text{Re}\int(-i\lambda)\Big[\frac{\alpha+2}{2}|u|^{\alpha}|\nabla\e|^2
+\frac{\alpha}{2}|u|^{\alpha-2}u^2(\nabla\bar{\e})^2\Big]dx \\
&\ge\Big(\text{Im}(\lambda)\frac{\alpha+2}{2}-|\lambda|\frac{\alpha}{2}\Big)
\int|u|^{\alpha}|\nabla\e|^2dx. 
\end{aligned}
\end{equation}
Next 
we estimate the first term plus the third term where the coefficient $\lambda$ is 
estimated by its modulus.
We use Lemma \ref{s-est-Lem} for $\alpha\ge1$ to have
\begin{align*}
&||u|^{\alpha}-|U|^{\alpha}|+||u|^{\alpha-2}u^2-|U|^{\alpha-2}U^2|
\lec(|u|^{\alpha-1}+|U|^{\alpha-1})|u-U| \\
&\lec(|u-U|^{\alpha-1}+|U|^{\alpha-1})|u-U|=|\e|^{\alpha}+|U|^{\alpha-1}|\e|
\end{align*}
where we used $|u|^{\alpha-1}\lec|u-U|^{\alpha-1}+|U|^{\alpha-1}$ at the beginning of 
the second line. In the long run, we have
\begin{align}\label{eq03}
-\frac{d}{dt}\|\nabla\e(t)\|_{L^2}^2
\lec\int(|\e|^{\alpha}+|U|^{\alpha-1}|\e|)|\nabla U||\nabla\e|
+|\nabla\Delta U||\nabla\e|dx.
\end{align}
We estimate the first term in \eqref{eq03}. We separate it into the follwing two cases. 
\begin{align}\label{p-case1}
1<&\alpha\le\frac{N}{N-2}, \\
2\le &\alpha\le\frac{4}{N-2}. \label{p-case2}
\end{align} 
In the former case \eqref{p-case1}, we estimate
\begin{equation}\label{eq02}
\begin{aligned}
\int|\e|^{\alpha}|\nabla U||\nabla\e|dx
&\le\|\e\|_{L^r}^{\alpha}\|\nabla U\|_{L^\infty}\|\nabla\e\|_{L^2} \\
&\lec\|\e\|_{L^2}^{\alpha-\frac{N}2(\alpha-1)}\|\nabla U\|_{L^\infty}
\|\nabla\e\|_{L^2}^{1+\frac{N}2(\alpha-1)} \\
&\lec(-t)^{\mu_2}\|\nabla\e\|_{L^2}^{1+\frac{N}2(\alpha-1)}
\end{aligned}
\end{equation}
where we apply H\"older inequality and Gagliard-Nirenberg inequality with 
\begin{align*}
1=\frac{\alpha}{r}+\frac{1}{\infty}+\frac{1}{2}
\end{align*}
and
\begin{align*}
\frac1r=\frac12-\frac{\theta}{N}, \qquad\alpha\theta=\frac{N}2(\alpha-1)
\end{align*}
respectively. These satisfy
\begin{align*}
0<\theta=\frac{\alpha-1}{\alpha}\frac{N}{2}\le1
\end{align*}
which is provided by \eqref{p-case1}. 
We estimate the 
power of $t$ in \eqref{eq02}. From \eqref{U-eq2} and \eqref{eq01} we have
\begin{align*}
\mu_2=\Big(1-\frac{1}{\alpha}-\frac{4-N}{2k}\Big)
\Big(\alpha-\frac{N}2(\alpha-1)\Big)-\frac{1}{\alpha}-\frac{1}{k}.
\end{align*}
We require that this is strictly greater than $-1$. 
For simplisity we take $k=\infty$ and then $\mu_2>-1$ gives
\begin{align*}
(\alpha-1)\Big(\alpha+1-\frac{N}{2}(\alpha-1)\Big)>0
\end{align*}
and so
\begin{align}
1<\alpha<\frac{N+2}{N-2} \label{A2}
\end{align}
which is provided by \eqref{p-assume1}. 
In the latter case \eqref{p-case2}, we follow the arguments in \cite{CMZ}.
\begin{align*}
\int|\e|^{\alpha}|\nabla U||\nabla\e|dx
\lec\int|\e|(|U|^{\alpha-1}+|U+\e|^{\alpha-1})|\nabla U||\nabla\e|dx.
\end{align*}
The first term of this is the same with the second term in \eqref{eq03}, and we estimate later. 
We estimate the second term  of this by Cauchy Schwarz and with any $\delta>0$
\begin{align*}
&\int|\e||U+\e|^{\alpha-1}|\nabla U||\nabla\e|dx \\
&\le\left(\int|U+\e|^{\alpha}|\nabla\e|^2dx\right)^{\frac12}
\left(\int|\e|^2|U+\e|^{\alpha-2}|\nabla U|^2dx\right)^{\frac12} \\
&\le\delta\int|U+\e|^{\alpha}|\nabla\e|^2dx
+\frac{1}{4\delta}\int|\e|^2|U+\e|^{\alpha-2}|\nabla U|^2dx.
\end{align*}
We absorb the first term of this into \eqref{positive1}. 
We estimate the second term
\begin{align}\label{eq04}
\int|\e|^2|U+\e|^{\alpha-2}|\nabla U|^2dx
\lec\|\e\|_{L^2}^2\|U\|_{L^{\infty}}^{\alpha-2}\|\nabla U\|_{L^{\infty}}^2
+\|\e\|_{L^{\alpha}}^{\alpha}\|\nabla U\|_{L^{\infty}}^{2}
\end{align}
where we used $\alpha\ge2$. The Gagliardo-Nirenberg inequality also uses 
the condition $\alpha\ge2$
\begin{align*}
\|\e\|_{L^{\alpha}}^{\alpha}
\lec\|\e\|_{L^2}^{\frac{2N-\alpha(N-2)}{2}}\|\nabla\e\|_{L^2}^{\frac{N}{2}(\alpha-2)}.
\end{align*}
Therefore the right hand side of \eqref{eq04} is bounded by $C(-t)^{\mu_3}$ with 
some $C>0$ and $\mu_3>-1$. 
We also follow the estimates in \cite{CMZ} 
for the sencond and third terms in \eqref{eq03}. 
We use H\"older inequality and \eqref{U-eq1}, \eqref{U-eq2} and \eqref{U-eq4} 
to have
\begin{align*}
\int|U|^{\alpha-1}|\e||\nabla U||\nabla\e|dx
&\le\|U\|^{\alpha-1}_{L^{\infty}}\|\e\|_{L^2}\|\nabla U\|_{L^{\infty}}
\|\nabla\e\|_{L^2} \\
&\lec(-t)^{-\frac{\alpha-1}{\alpha}+\left(1-\frac{1}{\alpha}-\frac{4-N}{2k}\right)
-\frac{1}{\alpha}-\frac{1}{k}}\|\nabla\e\|_{L^2} \\
&=(-t)^{-\frac{1}{\alpha}-\frac{6-N}{2k}}\|\nabla\e\|_{L^2}
\end{align*}
and 
\begin{align*}
\int|\nabla\Delta U||\nabla\e|dx&\le\|\nabla\Delta U\|_{L^2}\|\nabla\e\|_{L^2} \\
&\lec(-t)^{-\frac{1}{\alpha}-\frac{6-N}{2k}}\|\nabla\e\|_{L^2}.
\end{align*}
We set $\mu_4=-\frac{1}{\alpha}-\frac{6-N}{2k}$ and $\mu_5=\min\{\mu_2, \mu_3, \mu_4\}$, 
we saw $\mu_5>-1$ for sufficiently large $k$. 
We then estimate \eqref{eq03} as
\begin{align*}
-\frac{d}{dt}\|\nabla\e(t)\|_{L^2}^2
\lec(-t)^{\mu_5}(1+\|\nabla\e\|_{L^2}^{1+\frac{N}{2}(\alpha-1)}). 
\end{align*}
This and $\e(T_n)=0$ give the value $\delta>0$ such that
\begin{align}\label{G3}
-\frac{d}{dt}\|\nabla\e(t)\|_{L^2}^2
\lec(-t)^{\mu_5}, \quad T_n-\delta<t<T_n 
\end{align}
uniformly with respect to $n$. Integrate this and we have \eqref{G1}. 
Next we show \eqref{G2}. We set $a>0$ and estimate
\begin{align*}
&\frac12\frac{d}{dt}\|e^{-a|\cdot|^2}|\cdot|\e\|_{L^2}^2
=\text{Re}\LR{\partial_t\e,e^{-2a|\cdot|^2}|\cdot|^2\e} \\
&=-\text{Re}\LR{i\nabla\e,(-4ax|x|^2e^{-2a|x|^2}+2xe^{-2a|x|^2})\e}
-\text{Re}\LR{ie^{-2a|x|^2}x\e,x\e} \\
&\qquad-\text{Re}\LR{i\lambda(|u|^{\alpha}u-|U|^{\alpha}U),e^{-2a|\cdot|^2}|\cdot|^2\e}
+\text{Re}\LR{i\Delta U,e^{-2a|\cdot|^2}|\cdot|^2\e}.
\end{align*}
The second term is zero and the third term is non negative from the same calculation 
with $I\ge0$ in \eqref{I2eq}. We only have to estimate the first term and fourth term 
like as
\begin{align*}
-\frac12\frac{d}{dt}\|e^{-a|\cdot|^2}|\cdot|\e\|_{L^2}^2
&\le2\|\nabla\e\|_{L^2}\|(1-2a|\cdot|^2)e^{-2a|\cdot|^2}|\cdot|\e\|_{L^2}
+\||\cdot|\Delta U\|_{L^2}\|e^{-2a|\cdot|}|\cdot|\e\|_{L^2} \\
&\le2\|\nabla\e\|_{L^2}\|e^{-a|\cdot|^2}|\cdot|\e\|_{L^2}
+\||\cdot|\Delta U\|_{L^2}\|e^{-a|\cdot|}|\cdot|\e\|_{L^2}.
\end{align*}
We then have from \eqref{G3} and \eqref{U-eq5}
\begin{align*}
-\frac{d}{dt}\|e^{-a|\cdot|^2}|\cdot|\e\|_{L^2}
&\lec\|\nabla\e\|_{L^2}+\||\cdot|\Delta U\|_{L^2} \\
&\lec(-t)^{-\frac{1}{\alpha}-\gamma}, \quad T_n-\delta<t<T_n.
\end{align*}
Integrate this to have
\begin{align*}
\|e^{-a|\cdot|^2}|\cdot|\e\|_{L^2}
\lec(T_n-t)^{1-\frac{1}{\alpha}-\gamma}, \quad T_n-\delta<t<T_n.
\end{align*}
We obtain the result \eqref{G2} by using Fatou's lemma as $a\to0+0$. 
\end{proof}

\section{Construction of blow up solution}

\begin{proof}
We construct the solution for Theorem \eqref{nls} from the 
approximate solution $u_n$ and the difference function $\e_n$ above. 
We set for $t>0$, 
\begin{align*}
v_n(t)=u_n(T_n-t), \qquad
V_n(t)=U(T_n-t), \qquad \eta_n(t)=\e_n(T_n-t).
\end{align*}
In the following, we consider $0<t<\delta$ since we had $T_n-\delta\le T_n-t\le T_n$ 
in \eqref{G1}. We also remember $T_n=-\frac{1}{n}<0$. 
Then we have $v_n=V_n+\eta_n$ and 
\begin{equation}\label{eta-eq1}
\begin{aligned}
\partial_t\eta_n&=-i\Delta\eta_n-|v_n|^{\alpha}v_n+|V_n|^{\alpha}V_n-i\Delta V_n \\
&=-i\Delta\eta_n-|V_n+\eta_n|^{\alpha}(V_n+\eta_n)+|V_n|^{\alpha}V_n-i\Delta V_n 
\end{aligned}
\end{equation}
on $0<t<\delta$. It holds by \eqref{G1} and \eqref{G2}
\begin{align}\label{H1bdd3}
\|\eta_n(t)\|_{H^1}+\||\cdot|\eta_n(t)\|_{L^2}\le Ct^{\mu}, \quad 0<t<\delta.
\end{align}
We estimate
\begin{align*}
\|V_n(t)\|_{L^p}\lec(t-T_n)^{-\frac{1}{\alpha}+\frac{N}{pk}}
=\Big(t+\frac{1}{n}\Big)^{-\frac{1}{\alpha}+\frac{N}{pk}}
\le t^{-\frac{1}{\alpha}+\frac{N}{pk}}
\end{align*}
for sufficiently large $k$ and any $n$. 
From the embeddings $H^1(\R^N)\embed L^{\alpha+2}(\R^N)$ and 
$L^{\frac{\alpha+2}{\alpha+1}}(\R^N)\embed H^{-1}(\R^N)$, 
\begin{align*}
&\||V_n+\eta_n|^{\alpha}(V_n+\eta_n)\|_{H^{-1}}\lec
\||V_n+\eta_n|^{\alpha}(V_n+\eta_n)\|_{L^{\frac{\alpha+2}{\alpha+1}}} \\
&\lec\|V_n\|_{L^{\alpha+2}}^{\alpha+1}+\|\eta_n\|_{L^{\alpha+2}}^{\alpha+1}
\lec\|V_n\|_{L^{\alpha+2}}^{\alpha+1}+\|\eta_n\|_{H^1}^{\alpha+1}
\lec t^{-\kappa},
\end{align*}
and therefore
\begin{align*}
\|\partial_t\eta_n\|_{H^{-1}}
&\lec\|\eta_n\|_{H^1}+\|V_n\|_{L^{\alpha+2}}^{\alpha+1}+\|\eta_n\|_{H^1}^{\alpha+1}
+\|\Delta V_n\|_{L^2} \\
&\lec t^{-\kappa}
\end{align*}
with some $\kappa>0$ for any $0<t\le\delta$ and any $n$. 
Given any $\tau\in(0,\delta)$, if we restrict the interval 
$(\tau,\delta)$, the sequence $\{\eta_n\}$ is bounded in 
$L^{\infty}(\tau,\delta:\Sigma)\cap W^{1,\infty}(\tau,\delta:H^{-1})$. 
Now we apply the Aubin-Lions lemma with
\begin{align*}
\Sigma\embed\embed L^2\embed H^{-1}
\end{align*} 
to conclude that 
there exists a subsequence which is still written by $\eta_n$ 
and the limits $\eta\in L^{\infty}(\tau,\delta,L^2)$ such that
\begin{align}\label{CL2limit2}
\|\eta_n-\eta\|_{L^{\infty}(\tau,\delta,L^2)}\to0.
\end{align}
We apply the diagonal argument. For sufficiently large $m$, we set 
$\sigma_k=\frac{1}{k}, k=m, m+1, m+2,\ldots$ such as $0<\sigma_k<\delta$. 
We obtain \eqref{CL2limit2} for each $\tau=\sigma_k$. We take the 
subsequence $\{\eta_{n_j}\}$ 
diagonally to have the limt $\eta$ which 
belongs to $L^{\infty}_{\rm loc}(0,\delta:L^2)$ and satisfies
\begin{align*}
\|\eta_{n_k}-\eta\|_{L^{\infty}(\tau,\delta,L^2)}\to0
\end{align*}
for any $0<\tau<\delta$. From this and the boundedness \eqref{H1bdd3}, 
we utilize Lemma \ref{con-abst} with $I=(\tau,\delta)$ to obtain 
four kinds of convergence \eqref{con1}, \eqref{con2}, \eqref{con3} and 
\eqref{con4} on the same $I=(\tau,\delta)$.
Therefore the limit $\eta$ satisfies the equation which corresponds to \eqref{eta-eq1},
\begin{align*}
\partial_t\eta=-i\Delta\eta-|U+\eta|^{\alpha}(U+\eta)+|V|^{\alpha}U-i\Delta U. 
\end{align*}
The function $u=U+\eta$ satisfies \eqref{nls} and for any $-\delta<t<0$,
\begin{align*}
\|u(t)-U(t)\|_{H^1}=\|\eta(-t)\|_{H^1}
\le\liminf_{k\to\infty}\|\eta_{n_k}(-t)\|_{H^1}\le C(-t)^{\mu}.
\end{align*}
\end{proof}


\end{document}